\newtheorem{Theorem}{Theorem}[section]
\newtheorem{Corollary}[Theorem]{Corollary}
\newtheorem{Lemma}[Theorem]{Lemma}
\newtheorem{Proposition}[Theorem]{Proposition}
\theoremstyle{definition}
\def \R{\mathbb R}
\def \p{\partial}
\def\g{\gamma}
\def \g{\gamma}
\def \<{\langle}
\def \>{\rangle}
\def \E{\mathcal{E}}
\def \tX{\widetilde X}
\def \tx{\tilde x}
\def \e{\epsilon}
\def \o{\omega}
\def \l{\lambda}
\def \S{\mathbb S}
\def \gxy{\gamma_{x,y,\lambda,\omega}}
\def \xy{x,y,\lambda,\omega}
\begin{document}

\title[The inverse problem for the local ray transform]{The inverse problem for the local ray transform}

\author[Hanming Zhou]{Hanming Zhou}
\address{Department of Mathematics, University of Washington, Seattle, WA 98195-4350, USA}
\email{hzhou@math.washington.edu}

\begin{abstract}
In this paper we consider the local X-ray transform for general flows. We extend the results on the local and global invertibility of the geodesic ray transform proved by Uhlmann and Vasy \cite{UV} to the X-ray transform for a general flow. The key improvement is that our argument for the ellipticity of the conjugated operator $A_F$ (which is defined in Section 2) can be applied to flows other than the geodesic flow.
\end{abstract}
\maketitle

\section{Introduction}

Let $X$ be a domain in a Riemannian manifold $(\widetilde{X}, g)$ of dimension $\geq 3$. In this paper we consider the local inverse problem for the X-ray transform for a general flow. A \emph{general flow} $\phi_t=(\gamma,\gamma')$ on $S\widetilde{X}$ (the unit sphere bundle on $\tX$) satisfies the following equation
\begin{equation}\label{flow}
\nabla_{\g'}\g'=E(\g, \g'),
\end{equation}
where $\g$ is parameterized by arclength, $E(z, v)\in T_z\widetilde{X}$ is smooth on $S\tX$. We call the curve $\g$ on $\tX$ that satisfies equation \eqref{flow} an \emph{$\E$-geodesic}. Thus we also call the X-ray transform for a general flow the \emph{$\E$-geodesic transform}, for an $\E$-geodesic $\g$ and a function $f$ on $\tX$, it is defined as
$$If(\g)=\int f(\g(t))dt.$$
We extend $E$ onto $T\tX\backslash o$ (the tangent bundle excluding the zero vector for each fiber) by defining $E(z,v)=|v|_g^2E(z,\frac{v}{|v|_g})$ for $(z,v)\in T\tX\backslash o$, we still denote the extended one by $E$. So $E$ is homogeneous of degree $2$ in $v$. When $E=0$, this gives the usual geodesic flows. Let $z\in\p X$, we say $X$ is \emph{$\mathcal{E}$-convex} at $z$ if 
$$\Lambda(z,v)\geq\<E(z,v), \nu(z)\>$$ 
for all $v\in T_z(\p X)\backslash o$, where $\Lambda$ is the second fundamental form of $\p X$, $\nu(z)$ is the inward unit vector normal to $\p X$ at $z$. If the inequality is strict, then we say $X$ is \emph{strictly $\mathcal{E}$-convex} at $z$. (These terms are not standard, we only use them within this paper.)

For an open set $O\subset\overline{X} (O\cap \p X\neq \emptyset)$, we call $\E$-geodesic segments $\g$ which are contained in $O$ with endpoints at $\p X$ \emph{O-local $\E$-geodesics}; we denote the set of these by $\mathcal M_O$. Thus $\mathcal M_O$ is an open subset of the smooth manifold $\mathcal M$ of all $\E$-geodesics. We then define the \emph{local $\E$-geodesic transform} (or the local X-ray transform for a general flow) of a function $f$ defined on $X$ as the collection $(If)(\g)$ of integrals of $f$ along $\g\in\mathcal M_O$, i.e. as the restriction of the X-ray transform to $\mathcal M_O$. We ask the following question:

\emph{Can we determine $f|_O$, the restriction of $f$ on the open set $O$, by knowing $(If)(\g)$, for all $\g\in\mathcal M_O$?}

In order to state our main theorem in concrete terms, we introduce some notation below. Let $\rho\in C^{\infty}(\tX)$ be a defining function of $\p X$, so that $\rho>0$ in $X$ and $\rho<0$ on $\tX\backslash\overline{X}$, vanishes at $\p X$ with $|\nabla\rho(z)|_g=1$ for $z\in \p X$. Our main theorem is an invertibility result for the local $\E$-geodesic transform on neighborhoods of $p\in \p X$  in $\overline{X}$ of the form $\{\tx>-c\}$, $c>0$, where $\tx$ is a function with $\tx(p)=0$, $d\tx(p)=-d\rho(p)$, see Section 2 for the definition of $\tx$. The statement of our theorem is similar to that of \cite{UV} on the local geodesic ray transform, we now are able to show the same results for the local $\E$-geodesic transform.

\begin{Theorem}
Assume $\overline{X}$ is strictly $\E$-convex at $p\in\p X$, there exists a function $\tx\in C^{\infty}(\tX)$ vanishing at $p$ with $d\tx(p)=-d\rho(p)$ such that for $c>0$ sufficiently small, and with $O_p=\{\tx>-c\}\cap\overline{X}$, the local $\E$-geodesic transform is injective on $H^s(O_p), s\geq 0$.

Further, let $H^s(\mathcal M_{O_p})$ denote the restriction of elements of $H^s(\mathcal M)$ to $\mathcal M_{O_p}$, and for $F>0$ let 
$$H^s_F(O_p)=e^{F/(\tx+c)}H^s(O_p)=\{f\in H^s_{loc}(O_p): e^{-F/(\tx+c)}f\in H^s(O_p)\}.$$
Then for $s\geq 0$ there exists $C>0$ such that for all $f\in H^{s}_F(O_p)$,
$$\|f\|_{H_F^{s-1}(O_p)}\leq C\|If|_{\mathcal M_{O_p}}\|_{H^s(\mathcal M_{O_p})}.$$
\end{Theorem}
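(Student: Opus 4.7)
Following the strategy of Uhlmann--Vasy \cite{UV}, the plan is to introduce an artificial boundary $\{\tx=-c\}$ and to analyze a suitable normal operator in Melrose's scattering pseudodifferential calculus on the compactification of $O_p$ with $\tx+c$ playing the role of the scattering boundary defining function. I would first construct $\tx$ near $p$ using the strict $\E$-convexity hypothesis: in Fermi-type coordinates one can take $\tx=-\rho+\e|y|^2/2$ with $y$ tangential and $\e>0$ small, so that strict $\E$-convexity of $\p X$ together with the quadratic correction makes $\{\tx=-c\}$ strictly $\E$-convex from inside $O_p$, and every $\E$-geodesic tangent to this artificial boundary exits $O_p$ on both sides in arbitrarily short time as $c\to 0$. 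Then, with a cutoff $\chi=\chi(z,v)$ supported on short trajectories staying in $O_p$, I would form a localized backprojection $I^*_\chi$ and define the weighted normal operator
$$A_F := e^{-F/(\tx+c)}\, I^*_\chi\, I\, e^{F/(\tx+c)}.$$

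\textbf{Ellipticity of $A_F$ (the main obstacle).} The central step, and the real novelty over \cite{UV}, is to compute the scattering principal symbol of $A_F$ at $\{\tx=-c\}$ and to show it is elliptic for a general flow. Writing $A_F$ as an oscillatory integral in local scattering coordinates, the conjugation by $e^{F/(\tx+c)}$ inserts an imaginary linear term into the phase, while the integration along $\E$-geodesics contributes a phase whose leading structure is controlled by the Taylor expansion
$$\g_{z,v}(t)=z+tv+\tfrac12 t^2 E(z,v)+O(t^3).$$
Thus, at the scattering face, only the tangent vector $v$ and the acceleration $E(z,v)$ appear in the principal symbol computation; the remainder of $E$ contributes only at the subprincipal level. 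Carrying out the $v$-integration yields, as in the geodesic case, a Gaussian in the tangential scattering momenta coming from the exponential weight, multiplied by a positive factor whose positivity is ensured by the strict $\E$-convexity of $\{\tx=-c\}$: the admissible set of directions $v$ is a nonempty open cone. The observation that unlocks flows beyond the geodesic one is precisely that the leading symbol computation uses only the pointwise data $v$ and $E(z,v)$, and no variational, parallel-transport, or reversibility property of the geodesic flow.

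\textbf{Parametrix, injectivity, and stability.} Ellipticity of $A_F$ in the scattering calculus provides a parametrix $Q$ with $QA_F=\mathrm{Id}+R$, with $R$ residual in the appropriate sense. Shrinking $c$ forces the $\chi$-supported trajectories to be short, which drives the operator norm of $R$ on the relevant weighted scattering Sobolev spaces below $1$, so $\mathrm{Id}+R$ is invertible by Neumann series and $A_F$ is genuinely invertible. Unwrapping the exponential conjugation converts scattering Sobolev estimates into weighted estimates on $H^s_F(O_p)$, yielding both injectivity on $H^s_F(O_p)$ and the bound
$$\|f\|_{H_F^{s-1}(O_p)}\leq C\|If|_{\mathcal M_{O_p}}\|_{H^s(\mathcal M_{O_p})}.$$
Injectivity on plain $H^s(O_p)$ then follows because any $f\in H^s(O_p)$, extended by zero across $\p X$, automatically lies in $H^s_F(O_p)$ for every $F>0$. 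Apart from the ellipticity step above, the remaining arguments are parallel to the geodesic case in \cite{UV}.
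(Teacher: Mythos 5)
Your overall architecture (artificial boundary $\{\tx=-c\}$, exponentially conjugated normal operator $A_F$ in the scattering calculus, ellipticity plus a small-$c$ Neumann series, then unwrapping the weights) matches the paper, and those portions are correctly delegated to \cite{UV}. But the one step you identify as ``the main obstacle'' is exactly where your argument has a genuine gap: you assert that carrying out the $v$-integration yields, \emph{as in the geodesic case}, a Gaussian in the tangential scattering momenta. That is false for a general flow, and it is precisely the failure the paper is built around. The boundary Schwartz kernel of $A_F$ at $x=0$ is
$e^{-FX}|Y|^{-n+1}\{\chi((X-\alpha(0,y,0,\hat Y)|Y|^2)/|Y|)+\chi((-X+\alpha(0,y,0,-\hat Y)|Y|^2)/|Y|)\}$,
where $\alpha(0,y,0,\omega)=\tfrac12\frac{d^2}{dt^2}(x\circ\g)|_{t=0}$ is, for general $E$, just a positive function of the direction $\omega\in\S^{n-2}$ and \emph{not} the restriction of a quadratic form. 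Consequently, after taking $\chi$ Gaussian and Fourier transforming in $X$, the exponent $b\,\alpha(y,\hat Y)|Y|^2$ is homogeneous of degree two in $Y$ but not quadratic, so the $Y$-Fourier transform is not a Gaussian and cannot be evaluated in closed form; the ``positive factor ensured by strict $\E$-convexity'' does not fall out of the geodesic computation. The paper's resolution is to pass to polar coordinates in $Y$: the radial integral is a one-dimensional Gaussian for each fixed $\hat Y$, giving a symbol proportional to $\<\xi\>^{-1}\int_{\S^{n-2}}e^{-|\eta\cdot\hat Y|^2/(4c\,\alpha(y,\hat Y))}\,d\hat Y$, and the lower bound $c\<(\xi,\eta)\>^{-1}$ is then proved using only $0<c_1\le\alpha\le c_2$, by splitting into the regimes $|\eta|/\<\xi\>$ bounded (trivial) and $|\eta|/\<\xi\>$ large, where the integrand concentrates on the equatorial sphere $\S^{n-3}$ orthogonal to $\eta$ --- an argument that uses $n\ge 3$ in an essential way. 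Without this (or some substitute) your ellipticity claim is unsupported, and it is the entire content of the theorem beyond \cite{UV}. Relatedly, you do not account for the non-reversibility of the flow ($\g_{x,y,-\l,-\o}(-t)\neq\gxy(t)$), which is why the symbol carries the two distinct terms $\alpha_\pm=\alpha(0,y,0,\pm\hat Y)$ rather than a single one.

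A secondary but real slip: your choice $\tx=-\rho+\e|y|^2/2$ has the wrong sign on the quadratic correction. The paper takes $\tx=-\rho-\e|z-p|^2$, so that $\{\tx\ge-c\}\cap\{\rho\ge0\}$ is a compact lens contained in the coordinate patch (one needs $\rho\le c$ \emph{and} $|z-p|^2\le c/\e$), while the perturbation is small enough that the strict $\E$-convexity inherited from $-\rho$ survives and the level sets of $\tx$ remain concave from the side of $O_p$. With your sign the region $\{\tx>-c\}$ is not compact and the localization and small-$c$ Fredholm arguments do not close.
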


It is worthy of remark that for this result we only need to assume convexity near the point $p$. This local result is new even in the case that the metric is conformal to the Euclidean metric. While this large weight $e^{F/(\tx+c)}$ means that the control over $f$ in terms of $If$ is weak at $\tx=-c$. Here $F>0$ can be taken small, but non-vanishing. Further, $\tx$, whose existence is guaranteed by the theorem, is such that $\tx=-c$ is concave from the side of $O_p$.

As an immediate consequence and application of our main theorem, we consider a domain $X$ with compact closure equipped with a boundary defining function $\rho: \overline{X}\rightarrow [0,\infty)$ whose level sets $\rho^{-1}(t), 0\leq t<T$ for some $T>0$, are strictly $\E$-convex (viewed from $\rho^{-1}((t,\infty))$), and $d\rho$ is non-zero on these level sets. We obtain the following corollary on the global injectivity of the $\E$-geodesic transform, which is an analog of the geodesic case in \cite{UV}, the proof is exactly the same.

\begin{Corollary}
For $X$ and $\rho$ as above, if the set $\rho^{-1}([T,\infty))$ has $0$ measure, the global $\E$-geodesic transform is injective on $L^2(X)$, while if $\rho^{-1}([T,\infty))$ has empty interior, the global $\E$-geodesic transform is injective on $H^s(X), s>n/2$.
\end{Corollary}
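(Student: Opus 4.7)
The proof proceeds by layer-stripping along the foliation $\{\rho^{-1}(t)\}_{0 \leq t < T}$, identical to the geodesic case in \cite{UV}. As a base case, every $p \in \p X = \rho^{-1}(0)$ satisfies the strict $\E$-convexity hypothesis, so the main theorem furnishes a neighborhood $O_p \subset \overline{X}$ on which $f|_{O_p}$ is determined by $If|_{\mathcal M_{O_p}}$; if $If \equiv 0$, then $f = 0$ on each such $O_p$, and compactness of $\p X$ (inherited from $\overline{X}$) gives $f = 0$ on $\rho^{-1}([0, t_0))$ for some $t_0 > 0$.

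Next I would define
$$\tau = \sup\{t \in [0, T] : f = 0 \text{ a.e. on } \rho^{-1}([0, t))\}$$
and argue by contradiction that $\tau \geq T$. Assume $\tau < T$. At each $p \in \rho^{-1}(\tau)$ the main theorem applies to the sub-domain $X_\tau = \rho^{-1}([\tau, \infty)) \cap \overline{X}$, giving a neighborhood $O_p$ of $p$ in $X_\tau$. The key observation enabling this step is that strict $\E$-convexity of every level set $\rho^{-1}(t)$, $0 \leq t < T$, forces any interior critical point of $\rho \circ \g$ lying in $\{\rho < T\}$ to be a strict local maximum (the $\E$-geodesic tangent to such a level set is pushed into $\{\rho < t\}$). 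Consequently $\g \cap X_\tau$ is a single connected piece for every global $\E$-geodesic $\g$, and every $O_p$-local $\E$-geodesic $\g_{O_p}$ coincides with $\g \cap X_\tau$ for some $\g$; the complement $\g \setminus \g_{O_p}$ lies in $\{\rho < \tau\}$ where $f = 0$, so $\int_{\g_{O_p}} f = \int_\g f = 0$, which is exactly the data the main theorem requires. Hence $f = 0$ on $O_p$, and a finite subcover of the compact set $\rho^{-1}(\tau)$ extends this to $\rho^{-1}([\tau, \tau + \delta))$ for some $\delta > 0$, contradicting the definition of $\tau$.

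Therefore $f = 0$ on $\rho^{-1}([0, T))$. If $\rho^{-1}([T, \infty))$ has measure zero, then $f = 0$ in $L^2(X)$. If it only has empty interior, then $\rho^{-1}([0, T))$ is dense in $\overline{X}$; the Sobolev embedding $H^s \subset C^0$ for $s > n/2$ then forces $f \equiv 0$ by continuity. The principal technical hurdle is the transfer of data from the global X-ray transform to the local one at each level set $\rho^{-1}(\tau)$, and this is handled cleanly by the unimodality of $\rho \circ \g$ established above.
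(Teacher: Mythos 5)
Your proposal is correct and is precisely the layer-stripping/continuity argument that the paper intends (it defers to the identical proof in \cite{UV}): vanish near $\p X$ via the main theorem, propagate inward through the strictly $\E$-convex foliation using that critical points of $\rho\circ\g$ in $\{\rho<T\}$ are strict local maxima (so the local data on each sub-level region is recovered from the global data once $f$ vanishes below), and finish with the measure-zero or Sobolev-embedding argument. No substantive difference from the paper's route.
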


We also point out that one can derive local reconstruction methods in the form of a Neumann series and global reconstruction methods and stability estimates by a layer stripping algorithm as in \cite{UV}.

The (global) X-ray transform has been extensively studied, among which the simpliest case is the geodesic ray transform. The standard geodesic ray transform, where one integrates a function along straight lines, corresponds to the case of Euclidean metric, which is well-known as the Radon transform and is the basis of medical imaging techniques such as CT and PET; while the case of integration along more general geodesics arises in geophysical imaging and ultrasound imaging. Uniqueness and stability of the geodesic ray transform was shown by Mukhometov \cite{Mu1} on simple surfaces, and also for more general families of curves in two dimension. In \cite{AD}, injectivity result was proved for a general family of curves on Finsler surfaces. The case of geodesics was generalized also for simple manifolds to higher dimensions in \cite{BG, Mu2, MR}. Corresponding problem for magnetic ray transform was studied in \cite{DPSU}. Not much is known for non-simple manifolds, certain results are given in \cite{Da, Sh2, Sh3}. A microlocal analysis of the geodesic ray transform when the exponential map has fold type singularities was done in \cite{SU3}. In dimension $n\geq 3$, the paper \cite{FSU} proves injectivity and stability for the X-ray transform integrating over quite a general class of analytic curves with analytic weights on a class of non-simple manifolds with real-analytic metrics. A completely new approach was provided in \cite{UV} to the uniqueness and stability of the global geodesic ray transform in dimension $n\geq 3$, assuming the manifold is foliated by strictly convex hypersurfaces, an explicit reconstruction procedure was also given.

We remark that Corollary 1.2, as a generalization of the corresponding result in \cite{UV}, provides a new approach to the uniqueness of the global problem for the X-ray transform for a general flow. The only method up to now, except in the real-analytic category \cite{FSU}, \cite{SU2}, has been the use of energy type equalities that introduced by Mukhometov \cite{Mu1} and developed by several authors which are now called ``Pestov identities". The global geometric condition imposed here is a natural analog of the condition $\frac{d}{dr}(r/c(r))>0$ proposed by Herglotz \cite{He} and Wiechert and Zoeppritz \cite{WZ} for an isotropic radial sound speed $c(r)$, and the condition of being foliated by strictly convex hypersurfaces in \cite{UV}. It allows in principle for conjugate points of the flow. A similar condition of foliating by convex hypersurfaces was used in \cite{SU4} to satisfy the pseudoconvexity condition needed for Carleman estimats. 

For the local X-ray transform, Krishnan \cite{Kr} studied the local geodesic ray transform under the assumption that the metric is real-analytic. The only result so far in the smooth category is for the local geodesic ray transform proved by Uhlmann and Vasy \cite{UV}. They defined an operator $A$ which is essentially a `microlocal normal operator' for the geodesic ray transform, such that $Af$ only depends on the integral of $f$ on the elements of $\mathcal M_{O_p}$ (the set of $O_p$-local geodesics). The operator $A$ is an elliptic pseudodifferential operator only in $\tx>-c$. However, it turns out that the exponential conjugate $A_F$ of $A$ is a scattering pseudodifferential operator in Melrose's scattering calculus \cite{Mel} on $\tx\geq -c$, see also \cite[Section 2]{UV}. They showed that $A_F$ is a Fredholm operator and it is invertible for $c$ near $0$, which induces both uniqueness and stability estimates for local geodesic transform. The key ingredient from the geodesic nature of the curves in their paper is that for $z\in\tX$ near point $p$, and geodesics $\gamma$ with $\gamma(0)=z, \frac{d}{dt}(\tx\circ \gamma)|_{t=0}=0$, then $\frac{d^2}{dt^2}(\tx\circ \gamma)|_{t=0}$ induces a positive definite quadratic form $\alpha$ near $p$. This quadratic form $\alpha$ plays a crucial role in showing the invertibility of the principal symbol of $A_F$ at the boundary $\tx=-c$.

Our main theorem is proved along the lines of \cite{UV}. We show that one can prove the invertibility of $A_F$ for general flows without the aid of the existence of some positive definite quadratic form, by using the polar coordinates to calculate the boundary principal symbol of $A_F$ at $\tx=-c$. 
\\

\noindent {\bf Acknowledgements.} The author thanks his advisor Prof. Gunther Uhlmann for suggesting this problem and helpful discussions and for reading the previous version of this paper. The work was partially supported by NSF.
\section{Proof of the main theorem}

Suppose first that $X$ is a domain in $(\widetilde X,g)$, $p\in\p X$, and $\p X$ is strictly $\E$-convex at $p$ (hence near $p$; the convexity assumption will guarantee that locally near $p$ in $\overline{X}$, every two points are joined by a unique $\E$-geodesic lying entirely in $X$ with possible exception of the endpoints ), thus 
$$-\Lambda(p,v)+\<E(p,v), \nu(p)\><0$$
for all $v\in T_p(\p X)\backslash o$. Given $\rho$ a boundary defining function of $\overline{X}$ with $|\nabla\rho(z)|_g=1$ for $z\in\p X$, we have that for vectors $v\in T_p (\p X)\backslash o$,
\begin{center}
$-\Lambda(p,v)+\<E(p,v), \nu(p)\>=$Hess$_p\rho(v,v)+\<E(p,v), \nabla\rho(p)\>$,
\end{center}
here $\nabla\rho|_{\p X}=\nu$ is the inward unit vector normal to $\p X$.
For $f\in C^{\infty}(\tX)$ and $\xi\in T\tX\backslash o$, we define $H_gf(\xi):=\<\nabla f, \xi\>$ and $H^2_gf(\xi):=$Hess$f(\xi,\xi)$, then we have that for vectors $v\in T_p\tX\backslash o$,
$$(H_g\rho)(v)=0 \Rightarrow (H^2_g\rho)(v)+\<E(p, v), \nabla\rho(p)\><0.$$
In particular, by the compactness of the unit sphere and the homogeneity of $H^2_g\rho$ and $E$, there is a neighborhood $U_0$ of $p$ in $\widetilde X$ and $\delta >0, C_0>0$ such that for vectors $v\in T_{U_0}\widetilde X\backslash o$,
$$|(H_g\rho)(v)|<\delta|v|_g \Rightarrow (H^2_g\rho)(v)+\< E(\cdot, v), \nabla\rho\>\leq -C_0|v|^2_g.$$
We want to define a function $\tilde x$ near $p$ such that $\tilde x(p)=0$, the region $\tilde x\geq -c$, $\rho\geq 0$, is compact for $c>0$ small, and the level sets of $\tilde x$ are concave from the side of this region (i.e. the super-level sets of $\tx$). By shrinking $U_0$ if needed, we may assume that it is a coordinate neighborhood of $p$. Concretely we let, for $\epsilon>0$ to be decided, and with $|\cdot |$ the Euclidean norm,
$$\tx(z)=-\rho(z)-\e |z-p|^2;$$
then $\tx\geq -c$ gives $\rho+\e |\cdot-p|^2\leq c$ and thus $\rho\leq c$; further, with $\rho\geq 0$ this gives $|z-p|^2\leq c/\e$. Thus for $c/\e$ sufficiently small, the region $\tx\geq-c, \rho\geq 0$, is compactly contained in $U_0$. Further, for $v\in T_{U_0}\tX\backslash o$, $H_g\tx(v)=-H_g\rho(v)-\e H_g|\cdot-p|^2(v)$, so $H_g\tx(v)=0$ implies $|H_g\rho(v)|<C'\e|v|_g$, so with $\delta>0$ as above there is $\e'>0$ such that for $\e\in(0,\e')$, $H_g\tx(v)=0$ in $U_0$ implies $|H_g\rho(v)|<\delta|v|_g$, and then for $\e<\e'$,
$$H_g^2\tx(v)+\<E(\cdot, v),\nabla\tx\>$$
$$=-(H^2_g\rho(v)+\< E(\cdot, v),\nabla\rho\>)-\e (H_g^2|\cdot-p|^2(v)+\< E(\cdot,v), \nabla |\cdot-p|^2\>)$$
$$\geq (C_0-C^{\prime\prime}\e)|v|^2_g.$$
Thus, there is $\e_0>0$ such that for $\e\in (0,\e_0)$, $H_g^2\tx(v)+\<E(p,v), \nabla\tx(p)\>\geq (C_0/2)|v|^2_g$ at $T_p\tX\backslash o$ when $H_g\tx$ vanishes. Thus taking $c_0>0$ sufficiently small (corresponding to $\e_0$), we have constructed a function $\tx$ defined on a neighborhood $U_0$ of $p$ with concave level sets (from the side of the super-level sets) and such that for $0\leq c\leq c_0$,
$$O_c=\{\tx>-c\}\cap\{\rho\geq 0\}$$
has compact closure in $U_0\cap\tX$ (Here $O_c$ is exactly the $O_p$ in Theorem 1.1).

From now on, we work with $x_c=\tx+c$, which is the boundary defining function of $x_c\geq0$; we supress the $c$ dependence and simply write $x$ in place of $x_c$. For most of the following discussion we completely ignore the actual boundary $\rho=0$; this will only play a role at the end since ellipticity properties only hold in $U_0$ and we need $f$ to be supported in $\rho\geq 0$, ensuring localization, in order to obtain injectivity and stability estimates. 

We complete $x$ to a coordinate system $(x,y)$ on a neighborhood $U_1\subset U_0$ of $p$. We use a fibration by level sets of the function $x$ with non-vanishing differential. Letting $V$ be a vector field orthogonal with respect to $g$ to these level sets with $Vx=1$, and using $\{x=0\}$ as the initial hypersurface, the flow of $V$ (locally) identifies a neighborhood of $\{x=0\}$ with $(-\e,\e)_x\times\{x=0\}$, with the first coordinate being exactly the function $x$ (since time $t$ flow by $V$ changes the value of $x$ to $t$). In particular, choosing coordinates $y_j$ on {x=0}, we obtain coordinates on this neighborhood such that $\p_{y_j}$ and $\p_x$ are orthogonal, i.e. the metric is of the form $f(x,y)dx^2+h(x,y,dy)$ with $f>0$. For each point $(x,y)$ we can parameterize $\E$-geodesics through this point by the unit sphere; the relevant ones for us are `almost tangent' to level sets of $x$, i.e. we are interested in ones with tangent vector $k(\lambda\p_x+\omega\p_y)$, $k(x,y,\l,\o)>0$ (to say have unit length), $\omega\in\mathbb S^{n-2}$ and $\lambda$ relatively small. 

Now, the $\E$-geodesic corresponding to $(z_0, v_0)\in S\tX$, $\g=\g_{z_0, v_0}$, is the projection of the flow $\phi_t$ emanating from $(z_0, v_0)$ (i.e. the intergral curve of $H_g$ through this point). If $f$ is a function on the base space $\tX$ then $\frac{d}{dt}(f\circ\g)(0)=H_gf(\g(0), \g'(0))$, $\frac{d^2}{dt^2}(f\circ\g)(0)=H^2_gf(\g(0), \g'(0))+\<E(\g(0),\g'(0)),\nabla f(\g(0))\>$ (Here $\g$ is parameterized by arclength). By our convexity assumption, at $\frac{d}{dt}(x\circ \g)(0)=0$, $\frac{d^2}{dt^2}(x\circ\g)(0)=H^2_gx(\g(0), \g'(0))+\<E(\g(0),\g'(0)),\nabla x(\g(0))\>>0$.

Now, for a given $\E$-geodesic $\g=\g_{x,y,k\l,k\o}$ with $\g(0)=(x,y),\, \g'(0)=k(\l\p_x+\o\p_y)$ (where $\omega\in\mathbb S^{n-2}$ and $\lambda$ relatively small) that is parameterized by arclength, we change the parameter to make the tangent vector $\g'(0)=\l\p_x+\o\p_y$ and $|\g'|_g\equiv|\g'(0)|_g=\frac{1}{k}$ along the curve. So generally the curve $\g$ is no longer parameterized by arclength, but it still has constant speed (this is crucial, if $If(\g)=0$ under the arclength parameter, then it still equals $0$ under the new parameter). Actually, by investigating the equation \eqref{flow} with the extended bundle map $E$, 
$$\nabla_{c\g'}c\g'=c^2\nabla_{\g'}\g'=c^2E(\g, \g')=E(\g, c\g'), \, c>0$$
i.e. the curve under the new parameter is an $\E$-geodesic of different energy level. We denote the original arclength parameter by $s$, the new parameter by $t$, and $\frac{ds}{dt}>0, \g_s(0)=\g_t(0)$ (i.e. $s=0\iff t=0$), then
$$\frac{d^2}{dt^2}(x\circ \g)=\frac{d^2}{ds^2}(x\circ\g)(\frac{ds}{dt})^2+\frac{d}{ds}(x\circ\g)\frac{d^2s}{dt^2}.$$
In particular, for $\g=\g_{x,y,\l,\o}$ with $\g(0)=(x,y),\,\g'(0)=\l\p_x+\o\p_y$, if $\frac{d}{dt}(x\circ \g)(0)=0$ (i.e. $\l=0$, thus $\frac{d}{ds}(x\circ \g)(0)=0$ too), we have $|\g'(0)|_g=|\o\p_y|_g=1$ (i.e. $s=t$). By convexity assumption, we have 
$$\alpha(x,y,0,\omega,0):=\frac{1}{2}\frac{d^2}{dt^2}(x\circ\g)(0)=\frac{1}{2}\frac{d^2}{ds^2}(x\circ\g)(0)>0$$ (Roughly speaking, the local change of parameter preserves convexity/positivity). \emph{Thus, from now on we use $(x,y,\l,\o)$, instead of $(x,y,k\l,k\o)$, to parameterize the curves $\g$ for sufficiently small $\l$. }

\noindent{\emph{Remark:}} In the case of the geodesic flow, the function $\alpha$ defined above gives a positive definite quadratic form on the tangent space of the level sets of $x$, which plays an important role in \cite{UV}. However, for a general flow, $\alpha$ is no longer a quadratic form, this means we need to modify the arguments of \cite{UV} to make it work for more general flows.

Similar to \cite{UV}, we will work in the following general setting. We consider integrals along a family of curves $\g_{x,y,\l,\o}$ in $\R^n$, $(x,y,\l,\o)\in\R\times\R^{n-1}\times\R\times\S^{n-2}$, depending smoothly on the parameters. Here $\R^{n-1}_y$ could be replaced by an arbitrary manifold and below we make $x$ small, so we are working in a tubular neighborhood of a codimension one submanifold of $\tX$. \emph{However, since the changes in the manifold setting are essentially just notational (we are working on a single coordinate chart), for the sake of clarity we work with $\R^{n}$.} Further, below we work with neighborhood of a compact subset $\{0\}\times K\subset \R_x\times\R_y^{n-1}$; $\g_{x,y,\l,\o}(t)$ would only need to be defined for $(x,y)$ in a fixed neighborhood $U$ of $\{0\}\times K$ and for $|\l|, |t|<\delta_0$, $\delta_0>0$ a fixed constant.

The basic feature we need is that for $x\geq 0$ and for $\l$ sufficiently small, depending on $x$, the curves stay in $[0,\infty)\times \R^{n-1}$. Thus for $x=0$, only $\l=0$ is allowed. We assume that 
$$\gxy(0)=(x,y),\, \g'_{\xy}(0)=(\l,\o),$$
$$\g^{\prime\prime}_{\xy}(t)=2(\alpha(\xy,t), \beta(\xy,t)),$$
with $\alpha, \beta$ smooth and
$$\alpha(0,y,0,\o,0)\geq 2C>0.$$
This implies that if $K\subset\R^{n-1}$ is compact, then for a sufficiently small neighborhood $U$ of $\{0\}\times K$ in $\R^n$ (with compact closure), and $|\l|,|t|<\delta_0$, where $\delta_0>0$ small, we have 
$$\alpha(\xy,t)\geq C>0.$$
One may assume that $x<\delta_0$ on $U$. Thus for $\g(t)=(x'(t),y'(t))$,
$$x'=x+\l t+t^2\int^{1}_{0}(1-\sigma)\alpha(\xy,\sigma)\, d\sigma\geq x+\l t+Ct^2/2,$$
so if $|\l|, |t|<\delta_0, (x,y)\in U$, then 
$$x'\geq \frac{C}{2}(t+\frac{\l}{C})^2+(x-\frac{\l^2}{2C}).$$
Thus, for $|\l|\leq \sqrt{2Cx}$ (and $|\l|<\delta_0$), $x'\geq 0$, i.e. the curves ($\E$-geodesics) remain in the half-space $x'\geq 0$ at least for $|t|<\delta_0$. Further, if we fix $x_0>0$, then $x'\geq x_0$ provided $|t+\frac{\l}{C}|>\sqrt{2x_0/C}$ and $|t|<\delta_0$, thus when $|\l|\leq C_0x_0$ and $|\l|<\delta_0$, then $x'\geq x_0$ provided $|t|>\frac{C_0x_0}{C}+\sqrt{2x_0/C}, |t|<\delta_0$. Assuming $x\leq x_0$ and taking $x_0$ sufficiently small so that $\frac{C_0}{C}x_0+\sqrt{2x_0/C}<\delta_0$, we thus deduce that the curve segments $\gxy|_{(-\delta_0,\delta_0)}$ are outside the region $x'<x_0$ for $t$ outside a fixed compact subinterval of $(-\delta_0,\delta_0)$. \emph{From now on, by $\g$ we mean the restriction $\gxy|_{(-\delta_0,\delta_0)}$, and we assume that the functions we integrate along $\g$ are supported in $x'\leq x_0/2$, so all integrals are on a fixed compact subinterval.}

Similar to the facts in \cite{FSU} and \cite{UV}, the maps
$$\Gamma_+: S\tX\times [0,\infty)\rightarrow [\tX\times\tX; diag],\, \Gamma_+(z,v,t)=\g_{z,v}(t)=(z, |z'-z|, \frac{z'-z}{|z'-z|})$$
and
$$\Gamma_-: S\tX\times (-\infty,0]\rightarrow [\tX\times\tX; diag],\, \Gamma_-(z,v,t)=\g_{z,v}(t)=(z, -|z'-z|, -\frac{z'-z}{|z'-z|})$$
are two diffeomorphisms near $S\tX\times \{0\}$. We actually work with (locally, in the region of interest) the set of tangent vectors of the form $\l\p_x+\o\p_y$, where $\o\in\S^{n-2}$ and $\l$ relatively small, which can be regarded as a small perturbation of a portion of the sphere bundle $S\tX$. Thus we reduce $\delta_0$ if necessary so that $\Gamma_+$ is a diffeomorphism on $U_{x,y}\times (-\delta_0,\delta_0)_{\l}\times\S^{n-2}_{\o}\times [0,\delta_0)_t$, and analogously for $\Gamma_-$. We assume this from now on.

Our inverse problem is now that assuming $(If)(\xy)=\int_{\R} f(\gxy(t)) dt$ is known, we would like to recover $f$ from it. Recall our convention from above, the integral is really over $(-\delta_0,\delta_0)$, and $f(x',y')$ vanishes for $x'\geq x_0/2$. Different from the geodesic flow in \cite{UV}, generally the flow defined by $E$ is not time reversible, i.e. 
$$\g_{x,y,-\l,-\o}(-t)\neq \gxy(t).$$
Thus we will have two curves with a given tangent line at each $(x,y)$, so having the integral of functions along both.

The idea is similar to \cite{UV}, namely to consider for $x>0$
\begin{equation}
(Af)(x,y)=\int_{\R}\int_{\S^{n-2}} (If)(\xy)\tilde{\chi}(x,\l) d\l d\o,
\end{equation}
where $\tilde\chi$ is supported in $|\l|\leq \sqrt{2Cx}$. We define 
$$\tilde\chi(x,\l)=x^{-1}\chi(\l/x),$$
where $\chi$ is compactly supported (for sufficiently small $x$). We can allow $\chi$ to depend smoothly on $y$ and $\o$; over compact sets such a behavior is uniform. As what mentioned in \cite{UV}, for $s\geq 0$, 
$$A=L\circ I: H^s([0,\infty)\times \R^{n-1})\rightarrow x^{-s-1}H^s([0,\infty)\times\R^{n-1})$$
is bounded, where $L: H^s([0,\infty)\times\R^{n-1}\times\R\times\S^{n-2})\rightarrow x^{-s-1}H^s([0,\infty)\times\R^{n-1})$ is also bounded and 
$$(Lu)(x,y)=\int_{\R}\int_{\S^{n-2}} u(x,y,\l,\o)\tilde{\chi}(x,\l) d\l d\o.$$ If we show $A$ is invertible as a map between above spaces of functions supported near $x=0$, we obtain an estimate for $f$ in terms of $If$. 

Similar to \cite[Proposition 3.3]{UV}, we want to study the uniform behavior of $A$ as $x\rightarrow 0$. It turns out that $A$ is an elliptic pseudodifferential operator in $x>0$ for proper choice of $\chi$; while the conjugates of $A$ by exponential weights are scattering pseudodifferential operators on $x\geq 0$.

\begin{Proposition}
Suppose $\chi\in C^{\infty}_c(\R)$, $\chi\geq 0$ with $\chi>0$ near $0$, then $A\in \Psi^{-1}(x>0)$ is elliptic, and the operator $A_F=x^{-1}e^{-F/x}Ae^{F/x}$ is in $\Psi^{-1,0}_{sc}(x\geq 0)$ for $F>0$.
\end{Proposition}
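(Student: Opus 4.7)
The plan is to follow the argument for \cite[Proposition 3.3]{UV} closely, with the key adaptation being that, as noted in the remark above, $\alpha(\xy,t)$ is no longer a quadratic form in $(\l,\o)$ for a general flow and so a clean algebraic change of variables is unavailable; instead I use polar coordinates in the direction parameter, relying only on smoothness and positivity of $\alpha$. Starting from
\[
(Af)(x,y) = \int\int\int f(\gxy(t))\,\tilde\chi(x,\l)\,dt\,d\l\,d\o,
\]
I split the $t$-integration into $t\ge0$ and $t\le0$ and apply the diffeomorphisms $\Gamma_\pm$ to change variables from $(t,\l,\o)$ to the curve endpoint $z'=\gxy(t)$. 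This expresses the Schwartz kernel of $A$ as a conormal distribution at the diagonal in $\{x>0\}^2$. Since $\tilde\chi$ is smooth and compactly supported in $\l$ for $x>0$, a standard argument (integration in one direction variable) identifies $A\in\Psi^{-1}(x>0)$.

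Ellipticity in $x>0$ follows by computing the principal symbol: pass to polar coordinates in $z'-(x,y)$ and Fourier-transform in the radial variable. The result is an integral of $\chi$ over a sphere of directions, weighted by positive factors. The hypothesis $\chi\ge 0$ with $\chi>0$ near $0$ makes this integral strictly positive. This computation only uses the $t=0$ jet of the flow and is essentially identical in form to the geodesic case.

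For the scattering part, write the kernel of $A_F$ by inserting the weight $e^{F/x(z')-F/x(z)}$ into that of $A$ and multiplying by $x^{-1}$, and then rescale to the scattering variables $\mu=\l/x$ and $s=t/x$. Under this rescaling, $\tilde\chi(x,\l)=x^{-1}\chi(\l/x)\,d\l$ becomes $\chi(\mu)\,d\mu$, and the expansion
\[
x(z')-x(z) = \l t + t^2\alpha(\xy,\cdot) + O(t^3)
\]
arising from the $\E$-geodesic equation shows
\[
\frac{F(x(z)-x(z'))}{x(z)x(z')} = -F\bigl(\mu s + s^2\alpha(\xy,\cdot) + O(x)\bigr),
\]
which extends smoothly in $(\mu,s,x,y)$ up to $x=0$. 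Hence the exponential weight is smooth on the scattering double space, and the $x^{-1}$ prefactor of $A_F$ exactly absorbs the Jacobian of the overall rescaling. The resulting kernel is polyhomogeneous of orders $(-1,0)$ on the scattering double space, identifying $A_F\in\Psi^{-1,0}_{sc}(x\ge 0)$.

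The main obstacle will be to verify carefully that, after the polar-coordinate change and the scattering rescaling $(\l,t)\mapsto x(\mu,s)$, the amplitude and phase extend smoothly to the scattering front face. In \cite{UV} this step is clean because $\alpha$ is a quadratic form in $(\l,\o)$ and therefore rescales trivially; in the present setting one must expand $\alpha(\xy,t)$ in Taylor series in $(\l,t)$ around $(\l,t)=(0,0)$, use the lower bound $\alpha(\xy,t)\ge C>0$ established above to produce the expected leading behavior, and verify that the remainder terms fit into the symbolic calculus uniformly in $x$. No such difficulty occurs for the interior ellipticity assertion.
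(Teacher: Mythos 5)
Your proposal is correct and follows essentially the same route as the paper: interior ellipticity via the diffeomorphisms $\Gamma_\pm$ and the sphere integral of $\chi$, and the scattering membership via rescaling by $x$, Taylor expansion of the flow, and the rapid off-diagonal decay of the conjugated kernel forced by $F>0$ together with the convexity bound $\alpha\ge C>0$. The only cosmetic difference is that you parametrize the rescaled kernel by the curve parameters $(\l/x,\,t/x)$ where the paper works in the scattering coordinates $X=(x'-x)/x^2$, $Y=(y'-y)/x$; these are related by the uniformly invertible expansions the paper records for $\l(\Gamma_\pm^{-1})/x$ and $t(\Gamma_\pm^{-1})$.
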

\begin{proof}
First we work in $x>0$, ignoring the limit $x\rightarrow 0$. The diffeomorphism property of $\Gamma_{\pm}$ allows us to rewrite, with $|dv|$ denoting a smooth measure on the transversal such as $|d\l||d\o|$,
$$Af(z)=\sum_{\bullet=+,-}\int f(z')\tilde\chi(\Gamma_{\bullet}^{-1}(z,z'))(\Gamma_{\bullet}^{-1})^*(|dv|dt)$$
in terms of $z,z'$ as
\begin{equation}\label{A}
\int f(z')|z'-z|^{-n+1}\{b(z,|z'-z|,\frac{z'-z}{|z'-z|})+b(z,-|z'-z|,-\frac{z'-z}{|z'-z|})\}dz', 
\end{equation}
$$b(z,0,w)=\tilde\chi(z,w)\sigma(z,w),$$
where $\sigma>0$ is bounded below (it is actually the Jacobian factor). Thus $A$ is a pseudodifferential operator with principal symbol given by the Fourier transform of 
$$|z'-z|^{-n+1}\{(\tilde{\chi}\sigma)(z,\frac{z'-z}{|z'-z|})+(\tilde{\chi}\sigma)(z,-\frac{z'-z}{|z'-z|})\}$$
in $Z=z'-z$. By similar argument as in the proof of \cite[Proposition 3.3]{UV}, we can get that the principal symbol of $A$ at $(z,\zeta)$ is of the form
\begin{equation}\label{symbolA}
c_n|\zeta|^{-1}\int_{\S^{n-2}} (\tilde\chi\sigma)(z,\hat Z^{\perp})+(\tilde\chi\sigma)(z,-\hat Z^{\perp}) d\hat Z^{\perp}=2c_n|\zeta|^{-1}\int_{\S^{n-2}} (\tilde\chi\sigma)(z,\hat Z^{\perp}) d\hat Z^{\perp},
\end{equation}
where $\hat Z^{\perp}$ is the parameter for the sphere $\S^{n-2}$ which is orthogonal to $\zeta$.
In particular, under our assumption of $\chi$, $A$ is an elliptic pseudodifferential operator of order $-1$, in accordance with the results of Stefanov and Uhlmann \cite{SU1}, provided $n>2$.

We now turn to the scattering behavior, i.e. as at least one of $x, x' \rightarrow 0$. We apply the scattering coordinates $(x,y,X,Y)$ introduced in \cite[Section 2]{UV}, where 
$$X=\frac{x'-x}{x^2}, \, Y=\frac{y'-y}{x}.$$
With $K$ denoting the Schwartz kernel of $A$, $A_F$ has Schwartz kernel 
\begin{equation}
K_F(x,y,X,Y)=x^{-1}e^{-FX/(1+xX)}K(x,y,X,Y),
\end{equation}
here $K$ has polynomial bounds in terms of $X,Y$. Our main claim is that $K_F$ and its derivatives have exponential decay for $F>0$, and $K_F$ is smooth for $(X,Y)$ finite, non-zero, conormal to $(X,Y)=0$. This will imply that $A_F$ is a scattering pseudodifferential operator.

Similar to \cite{UV}, we can also use $(x,y,|y'-y|,\frac{x'-x}{|y'-y|},\frac{y'-y}{|y'-y|})$ as the local coordinates on $\Gamma_+(supp\tilde\chi\times[0,\delta_0))$; and analogously for $\Gamma_-(supp\tilde\chi\times (-\delta_0,0])$ the coordinates are $(x,y,-|y'-y|,-\frac{x'-x}{|y'-y|},-\frac{y'-y}{|y'-y|})$. Indeed, this corresponds to the fact that we are using $(x,y,\l,\o)$ with $\o\in \S^{n-2}$, instead of $S\tX$, to parameterize curves, when $|y'-y|$ is large relative to $x'-x$, i.e. in our region of interest. Now, in terms of the scattering coordinates,
$$|y'-y|=x|Y|, \, \frac{x'-x}{|y'-y|}=\frac{xX}{|Y|}, \, \frac{y'-y}{|y'-y|}=\hat Y,$$
Thus similar to the estimates in \cite{UV} by Taylor expansion,
\begin{equation}\label{lambda}
\begin{split}
\frac{\l(\Gamma_+^{-1})}{x}=\frac{X}{|Y|}+|Y|\tilde\Lambda(x,y,x|Y|,\frac{xX}{|Y|},\hat Y), \\
\frac{\l(\Gamma_-^{-1})}{x}=-\frac{X}{|Y|}-|Y|\tilde\Lambda(x,y,-x|Y|,-\frac{xX}{|Y|},-\hat Y).
\end{split}
\end{equation}
Similarly,
$$\o(\Gamma_+^{-1})=\hat Y+x|Y|\tilde\Omega(x,y,x|Y|,\frac{xX}{|Y|},\hat Y),$$
$$\o(\Gamma_-^{-1})=-\hat Y-x|Y|\tilde\Omega(x,y,-x|Y|,-\frac{xX}{|Y|},-\hat Y)$$
and
\begin{equation}\label{time}
\begin{split}
t(\Gamma_+^{-1})=x|Y|+x^2|Y|^2\tilde T(x,y,x|Y|,\frac{xX}{|Y|},\hat Y), \\
t(\Gamma_-^{-1})=-x|Y|-x^2|Y|^2\tilde T(x,y,-x|Y|,-\frac{xX}{|Y|},-\hat Y).
\end{split}
\end{equation}
Thus,
\begin{equation}
\begin{split}
dt\, d\l\, d\o(\Gamma_{\pm}^{-1})=J(x,y,\pm |Y|,\pm\frac{X}{|Y|},\pm\hat Y)x^2|Y|^{-1}dXd|Y|d\hat Y\\
=J(x,y,\pm |Y|,\pm\frac{X}{|Y|},\pm\hat Y)x^2|Y|^{-n+1}dXdY
\end{split}
\end{equation}
where the density factor $J$ is smooth and positive, and $J|_{x=0}=1$. Note that on the blow-up of the scattering diagonal, $\{X=0,Y=0\}$, in the region $|Y|>\epsilon |X|$, thus on the support of $\chi$ in view of \eqref{lambda},
\begin{center}
$(x,y,|Y|,\frac{X}{|Y|},\hat Y)$ and $(x,y,-|Y|,-\frac{X}{|Y|},-\hat Y)$
\end{center}
are valid coordinates, with $\pm |Y|$ being the defining functions of the front face of this blow up. Taking into account the $x^{-1}$ in the definition of $\tilde\chi$, we deduce that $K_F$ is given by 
\begin{equation}\label{KF}
\begin{split}
e^{-FX/(1+xX)}|Y|^{-n+1}\{\chi(\frac{X}{|Y|}+|Y|\tilde\Lambda(x,y,x|Y|,\frac{xX}{|Y|},\hat Y)) J(x,y,|Y|,\frac{X}{|Y|},\hat Y)\\
+\chi(-\frac{X}{|Y|}-|Y|\tilde\Lambda(x,y,-x|Y|,-\frac{xX}{|Y|},-\hat Y))J(x,y,-|Y|,-\frac{X}{|Y|},-\hat Y)\},
\end{split}
\end{equation}
so in particular it is conormal to the front face on the blow-up of the scattering diagonal, of the form $\rho^{-n+1}b$, where $b$ is smooth up to the front face, and without the first exponential factor it, together with its derivatives has polynomial growth as $(X,Y)\rightarrow \infty$. We decompose $K_F$ into two pieces supported in $|(X,Y)|<2$ and $|(X,Y)|>1$ by a partition of unity. For the first term, supported in $|(X,Y)|<2$, similar calculations as in \eqref{A} in Fourier transforming this in $(X,Y)$ show that this term of $K_F$ is indeed the Schwartz kernel of an element of $\Psi_{sc}^{-1,0}$, with standard principal symbol being given by the analogue of \eqref{symbolA}. So we will show that the second term is Schwartz in $(X,Y)$ due to the exponential decay of the first factor of \eqref{KF} on the support of $\chi$. 

We use \eqref{time} to express $\frac{\l}{x}$ by 
\begin{equation}
x'=x+\l t+\alpha(x,y,\l,\o)t^2+O(t^3), \, y'=y+\o t+O(t^2),
\end{equation}
where the terms $O(t^2)$ and $O(t^3)$ have coefficients which are smooth in $(x,y,\l,\o)$.
Thus,
$$X=\frac{x'-x}{x^2}=\frac{\l t}{x^2}+\frac{\alpha t^2}{x^2}+\frac{t^3}{x^2}\Upsilon(x,y,\l,\o,t),$$
with $\Upsilon$ a smooth function of its arguments, so by \eqref{time}
$$X=\frac{\l(\Gamma_+^{-1})}{x}|Y|(1+x|Y|\tilde T)+\alpha(\Gamma_+^{-1})|Y|^2(1+x|Y|\tilde T)^2+x|Y|^3\Upsilon(\Gamma_+^{-1}),$$
also
$$X=\frac{\l(\Gamma_-^{-1})}{x}|Y|(-1-x|Y|\tilde T)+\alpha(\Gamma_-^{-1})|Y|^2(-1-x|Y|\tilde T)^2-x|Y|^3\Upsilon(\Gamma_-^{-1}).$$
Thus
\begin{equation}\label{lambda/x}
\begin{split}
\frac{\l(\Gamma_+^{-1})}{x}=\frac{X-\alpha(\Gamma_+^{-1})|Y|^2}{|Y|}+O(x),\\
\frac{\l(\Gamma_-^{-1})}{x}=\frac{-X+\alpha(\Gamma_-^{-1})|Y|^2}{|Y|}+O'(x),
\end{split}
\end{equation}
where $O(x)$ and $O'(x)$ have smooth coefficients in terms of $x,y,x|Y|,xX/|Y|,\hat{Y}$.
Thus for $\frac{\l_{\pm}}{x}\in(-c,c)$, $-c|Y|<X-\alpha(\Gamma_{\pm}^{-1})|Y|^2<c|Y|$, which shows (by the positivity of $\alpha$) that $X\rightarrow +\infty$ on supp$\tilde\chi$ if $|Y|\rightarrow \infty$, and indeed for $|Y|$ large enough, $X>C|Y|^2$ for some $C>0$.

Now for all $N$ the exponential factor in \eqref{KF} is $\leq C|(X,Y)|^{-N}$ for suitable $C$ on the support of $\chi$, so combined with the polynomial estimates for the derivatives of the other factors, it follows that $K_F$ is smooth in $(x,y)$ with values on functions Schwartz in $(X,Y)$ for $(X,Y)\neq 0$, and conormal to $(X,Y)=0$, which is exactly the characterization of the Schwartz kernel of a scattering pseudodifferentail operator. This finishes the proof of the Proposition.

\end{proof}

Here the additional information is the behavior of $A_F$ at $x=0$, but given that $A_F$ is an element of $\Psi_{sc}^{-1,0}$, the same information can be obtained from computing the boundary principal symbol. Indeed this is immediate from \eqref{KF} and \eqref{lambda/x} which show that at $x=0$ (the scattering front face) the Schwartz kernel $\tilde K(y,X,Y)$ of $A_F$ is 
$$e^{-FX}|Y|^{-n+1}\{\chi(\frac{X-\alpha(0,y,0,\hat Y)|Y|^2}{|Y|})+\chi(\frac{-X+\alpha(0,y,0,-\hat Y)|Y|^2}{|Y|})\}.$$
So the desired invertibility of $A_F$ amounts to the Fourier transform of the kernel, $\tilde K(y,\cdot,\cdot)$ being bounded below in absolute value by $c\<(\xi,\eta)\>^{-1}, c>0$ (here $(\xi,\eta)$ are the Fourier dual variables of $(X,Y)$). Thus our job now is to compute the Fourier transform of $\tilde K(y,\cdot,\cdot)$. Note that we allow that $\chi$ also depends on $y$ and $\o$. We have thus shown

\begin{Lemma}
The boundary principal symbol of $x^{-1}e^{-F/x}Ae^{F/x}$ is the $(X,Y)$-Fourier transform of 
\begin{equation}\label{boundary}
\begin{split}
\tilde{K}(y,X,Y)=e^{-FX}|Y|^{-n+1}\{\chi(\frac{X-\alpha(0,y,0,\hat Y)|Y|^2}{|Y|},y,\hat{Y})\\
+\chi(\frac{-X+\alpha(0,y,0,-\hat Y)|Y|^2}{|Y|},y,-\hat{Y})\}.
\end{split}
\end{equation}
\end{Lemma}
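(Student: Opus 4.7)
The plan is to read off the claimed formula directly from the two displays that were established inside the proof of the preceding Proposition, namely the explicit form of the Schwartz kernel $K_F$ in \eqref{KF} and the Taylor expansion \eqref{lambda/x} of $\l(\Gamma_{\pm}^{-1})/x$. Since the Proposition has already placed $A_F$ in $\Psi^{-1,0}_{sc}(x\geq 0)$, the standard structure of Melrose's scattering calculus tells us that the boundary principal symbol (at the scattering face $x=0$) is obtained as the fiberwise $(X,Y)$-Fourier transform of the restriction $K_F|_{x=0}$. So essentially no further microlocal machinery is needed; the work is the evaluation $x\to 0$.

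Concretely, first I would set $x=0$ in the three ingredients making up \eqref{KF}. The exponential conjugation factor collapses as
\[
e^{-FX/(1+xX)}\Big|_{x=0}=e^{-FX}.
\]
The Jacobian density $J$ was noted (just after its definition) to satisfy $J|_{x=0}=1$, so both the plus and minus terms lose the $J$-factors. The only slightly delicate piece is the argument of $\chi$: in \eqref{KF} this argument appears as $\l(\Gamma_+^{-1})/x$ written in the form of \eqref{lambda}, but to make the limit $x\to 0$ transparent I would instead use the alternative expression \eqref{lambda/x},
\[
\frac{\l(\Gamma_+^{-1})}{x}=\frac{X-\alpha(\Gamma_+^{-1})|Y|^2}{|Y|}+O(x),
\]
and its minus-sign counterpart, which were obtained precisely to expose the $x\to 0$ behavior. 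At $x=0$, the point $\Gamma_\pm^{-1}(0,y,X,Y)$ has $\l=0$ and $\o=\pm\hat Y$, so $\alpha(\Gamma_\pm^{-1})|_{x=0}=\alpha(0,y,0,\pm\hat Y)$; inserting this yields exactly the two arguments of $\chi$ in the statement of the Lemma, and also accounts for the fact that $\chi$ is allowed to carry an extra dependence on $(y,\o)$, a freedom already noted in the paragraph introducing $\tilde\chi$.

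Combining these three evaluations reproduces the claimed formula \eqref{boundary} for $\tilde K(y,X,Y)=K_F|_{x=0}$, and the boundary principal symbol of $A_F$ is then, by definition, its $(X,Y)$-Fourier transform. I do not anticipate a genuine obstacle here since both \eqref{KF} and \eqref{lambda/x} are already in the desired form; the only care needed is to verify that the $O(x)$ remainder in \eqref{lambda/x} has coefficients smooth in the scattering variables $(x,y,x|Y|,xX/|Y|,\hat Y)$, as was asserted in the Proposition, so that it does indeed vanish uniformly upon setting $x=0$ inside the smooth (and compactly supported) function $\chi$. That smoothness is exactly what was arranged in the derivation of \eqref{lambda/x} from \eqref{time} and the Taylor expansion of $(x',y')$, so restricting to $x=0$ is legitimate and the Lemma follows.
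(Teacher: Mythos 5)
Your proposal is correct and follows essentially the same route as the paper: the paper also obtains the Lemma by restricting the kernel \eqref{KF} to the front face $x=0$, using \eqref{lambda/x} to evaluate the argument of $\chi$, together with $J|_{x=0}=1$ and $e^{-FX/(1+xX)}|_{x=0}=e^{-FX}$, and then identifying the boundary principal symbol with the $(X,Y)$-Fourier transform of that restriction. Your explicit check that $\Gamma_\pm^{-1}$ at $x=0$ gives $\l=0$, $\o=\pm\hat Y$ (so $\alpha(\Gamma_\pm^{-1})|_{x=0}=\alpha(0,y,0,\pm\hat Y)$) is exactly the step the paper leaves implicit.
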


In order to find a suitable $\chi$ to make $A_F$ invertible, we follow the same strategy of \cite{UV}, namely we do calculation for $\chi(s)=e^{-s^2/(2F^{-1}\alpha)}$ with $F>0$ ( here we need the positivity of $\alpha$), so $\hat\chi(\cdot)=c\sqrt{F^{-1}\alpha}e^{-F^{-1}\alpha|\cdot|^2/2}$ for appropriate $c>0$. Here $\chi$ does not have compact support, and an approximation argument will be necessary. First, the Fourier transform of $\tilde K$ in $X$ is 
\begin{equation}
\begin{split}\label{FX1}
\mathcal{F}_X\tilde{K}(y,\xi,Y)=|Y|^{2-n}\{e^{-\alpha_+F|Y|^2}e^{-i\alpha_+\xi|Y|^2}\hat\chi((\xi-iF)|Y|,y,\hat Y) \\
+e^{-\alpha_-F|Y|^2}e^{-i\alpha_-\xi|Y|^2}\hat\chi((\xi-iF)|Y|,y,-\hat Y)\},
\end{split}
\end{equation}
where $\alpha_+=\alpha(0,y,0,\hat Y), \, \alpha_-=\alpha(0,y,0,-\hat Y)$. Substituting the particular $\chi$ into \eqref{FX1} yields a non-zero multiple of 
\begin{equation}\label{FX2}
|Y|^{2-n}\{(F^{-1}\alpha_+)^{\frac{1}{2}}e^{-F^{-1}(\xi^2+F^2)\alpha_+|Y|^2/2}+(F^{-1}\alpha_-)^{\frac{1}{2}}e^{-F^{-1}(\xi^2+F^2)\alpha_-|Y|^2/2}\}.
\end{equation}

Now we estimate the Fourier transform of \eqref{FX2} in $Y$ upto some non-zero multiple. As remarked previously, not like the case of the geodesic flow in \cite{UV}, generally our $\alpha$ may contain terms other than a quadratic form in $\hat Y$, which means the exponential term of $\mathcal{F}_X\tilde{K}(y,\xi,Y)$ is not Gaussian like in $Y$, thus we use polar coordiantes to compute the Fourier transform in $Y$. We denote $\frac{F^{-1}(\xi^2+F^2)}{2}$ by $b$, then
$$\mathcal{F}_Y(\mathcal{F}_X\tilde K)(y,\xi,\eta)\simeq\int e^{-i\eta\cdot Y}|Y|^{2-n}(\alpha_+^{1/2}e^{-b\alpha_+|Y|^2}+\alpha_-^{1/2}e^{-b\alpha_-|Y|^2}) dY$$
$$=\int_0^{+\infty}\int_{\S^{n-2}} e^{-i\eta\cdot\hat{Y}|Y|}|Y|^{2-n}(\alpha_+^{1/2}e^{-b\alpha_+|Y|^2}+\alpha_-^{1/2}e^{-b\alpha_-|Y|^2})|Y|^{n-2}d|Y|d\hat Y$$
$$=\frac{1}{2}\int_{\R}\int_{\S^{n-2}} e^{-i\eta\cdot\hat{Y}t}(\alpha_+^{1/2}e^{-b\alpha_+t^2}+\alpha_-^{1/2}e^{-b\alpha_-t^2})dt d\hat Y$$
$$\simeq\frac{1}{2}\int_{\S^{n-2}} b^{-1/2}(e^{-|\eta\cdot\hat Y|^2/4b\alpha_+}+e^{-|\eta\cdot\hat Y|^2/4b\alpha_-})d\hat Y$$
$$=b^{-1/2}\int_{\S^{n-2}} e^{-|\eta\cdot\hat Y|^2/4b\alpha(y,\hat Y)}d\hat Y,$$
here $\simeq$ means equal up to some multiple.

To estimate the Fourier transform of \eqref{boundary}, we need to study the joint $(\xi,\eta)$-behavior, i.e. when $\<(\xi, \eta)\>$ is going to infinity, where we need lower bounds. we denote $(\xi^2+F^2)^{1/2}$ by $\<\xi\>$, then $\mathcal F_{X,Y}\tilde K(y,\xi,\eta)$ is a constant multiple of
$$\<\xi\>^{-1}\int_ {\S^{n-2}} e^{-|\frac{\eta}{\<\xi\>}\cdot\hat Y|^2/4c\alpha(y,\hat Y)}d\hat Y$$
with $c=c(F)>0$. The only property of $\alpha(y,\hat Y)$ we need is locally uniformly positivity, i.e. $0< c_1\leq \alpha\leq c_2$ for some positive constants $c_1, c_2$ that depend on $y$ and are locally uniform, which is true under our assumption. 

When $\frac{|\eta|}{\<\xi\>}\leq C$, then $\<\xi\>^{-1}$ is equivalent to $\<(\xi,\eta)\>^{-1}$ in this region in terms of decay rates, 
$$\int_ {\S^{n-2}} e^{-|\frac{\eta}{\<\xi\>}\cdot\hat Y|^2/4c\alpha(y,\hat Y)}d\hat Y\geq \int_ {\S^{n-2}} e^{-c'|\frac{\eta}{\<\xi\>}|^2} d\hat Y\geq \int_{\S^{n-2}}e^{-C} d\hat Y=C.$$
Thus $\mathcal F_{X,Y}\tilde K(y,\xi,\eta)\geq C\<\xi\>^{-1}\simeq C\<(\xi,\eta)\>^{-1}$. 

When $\frac{|\eta|}{\<\xi\>}$ is bounded from below, in which case $\<(\xi,\eta)\>^{-1}$ is equivalent to $|\eta|^{-1}$, we write $\hat Y=(\hat Y^{\parallel}, \hat Y^{\perp})$ according to the orthogonal decomposition of $\hat{Y}$ relative to $\frac{\eta}{|\eta|}$, where $\hat Y^{\parallel}=\hat Y\cdot \frac{\eta}{|\eta|}$, and $d\hat Y$ is of the form $a(\hat Y^{\parallel})d\hat Y^{\parallel}d\theta$ with $\theta=\frac{\hat Y^{\perp}}{|\hat Y^{\perp}|}, a(0)=1$ then
$$\int_ {\S^{n-2}} e^{-|\frac{\eta}{\<\xi\>}\cdot\hat Y|^2/4c\alpha(y,\hat Y)}d\hat Y\geq \int_ {\S^{n-2}} e^{-c'|\frac{\eta}{\<\xi\>}\cdot\hat Y|^2}d\hat Y$$
$$=\int_{\R}\int_{\S^{n-3}} e^{-c'(\hat Y^{\parallel}\frac{|\eta|}{\<\xi\>})^2} a(\hat Y^{\parallel})d\theta d\hat Y^{\parallel}$$
\begin{equation}\label{Dirac}
=\frac{\<\xi\>}{|\eta|}\int_{\R}\{\frac{|\eta|}{\<\xi\>}e^{-c'(\hat Y^{\parallel}\frac{|\eta|}{\<\xi\>})^2}\}a(\hat Y^{\parallel})d\hat Y^{\parallel}\int_{\S^{n-3}} d\theta.
\end{equation}
Since $\frac{|\eta|}{\<\xi\>}e^{-c'(\hat Y^{\parallel}\frac{|\eta|}{\<\xi\>})^2}\rightarrow \delta_0$ in distributions as $\frac{|\eta|}{\<\xi\>}\rightarrow\infty$, \eqref{Dirac} is equal to $\frac{\<\xi\>}{|\eta|}\int_{\S^{n-3}} d\theta=2C\frac{\<\xi\>}{|\eta|}$ ($C>0$) modulo terms decaying faster as $\frac{|\eta|}{\<\xi\>}\rightarrow\infty$. In particular, there is $N>0$, such that 
$$\int_{\R}\{\frac{|\eta|}{\<\xi\>}e^{-c'(\hat Y^{\parallel}\frac{|\eta|}{\<\xi\>})^2}\}a(\hat Y^{\parallel})d\hat Y^{\parallel}\int_{\S^{n-3}} d\theta\geq C$$
for $\frac{|\eta|}{\<\xi\>}\geq N$. (Notice that the integral on $\S^{n-3}$ uses very strongly the assumption $n\geq 3$; when $n=3$, $d\theta$ is the point measure) Thus $\mathcal F_{X,Y}\tilde K(y,\xi,\eta)\geq C\frac{1}{\<\xi\>}\frac{\<\xi\>}{|\eta|}=C|\eta|^{-1}\simeq C\<(\xi,\eta)\>^{-1}$.

Thus we deduce that $\mathcal F_{X,Y}\tilde K(y,\xi,\eta)\geq c\<(\xi,\eta)\>^{-1}$ for some $c>0$, i.e the ellipticity claim for the case that $\chi$ is a Gaussian. By an approximation argument as in \cite{UV}, one obtain some $\chi\in C^{\infty}_c(\R)$ such that the Fourier transform of $\tilde K$ with this $\chi$ still has lower bounds $\tilde c\<(\xi,\eta)\>^{-1}, \tilde c>0$, as desired. We have thus proved:
\begin{Proposition}
For $F>0$ there exists $\chi\in C^{\infty}_c(\R)$, $\chi\geq 0, \chi(0)=1$, such that for the corresponding operator $x^{-1}e^{-F/x}Ae^{F/x}$ the boundary principal symbol is elliptic.
\end{Proposition}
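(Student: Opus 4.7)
The plan is to use the explicit formula for $\tilde K$ provided by the Lemma and show that $\mathcal F_{X,Y}\tilde K(y,\xi,\eta)$ is bounded below by $c\langle(\xi,\eta)\rangle^{-1}$ for some $c>0$ and a suitable choice of $\chi$. My strategy is first to carry out the computation with a concrete Gaussian choice $\chi(s)=e^{-s^2/(2F^{-1}\alpha)}$, which makes both Fourier transforms tractable, and then to pass to a compactly supported $\chi$ by an approximation argument as in \cite{UV}.

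With this Gaussian $\chi$, the Fourier transform in $X$ is computed by completing the square, yielding a sum of two Gaussian factors $\alpha_\pm^{1/2}|Y|^{2-n}e^{-F^{-1}(\xi^2+F^2)\alpha_\pm|Y|^2/2}$ with $\alpha_\pm=\alpha(0,y,0,\pm\hat Y)$. For the $Y$ Fourier transform, the key difficulty is that $\alpha(y,\hat Y)$ is not in general a quadratic form in $\hat Y$ (this is the whole point of moving beyond the geodesic case), so the integrand is not Gaussian in the Cartesian variable $Y$ and the clean change of variables of \cite{UV} is unavailable. I would therefore switch to polar coordinates $Y=t\hat Y$: the factor $|Y|^{2-n}$ cancels the angular Jacobian $|Y|^{n-2}$, leaving a one-dimensional Gaussian integral in $t$ that collapses the $\pm$ contributions (after $\hat Y\mapsto-\hat Y$) into a non-zero multiple of
\[
\langle\xi\rangle^{-1}\int_{\S^{n-2}} e^{-|\frac{\eta}{\langle\xi\rangle}\cdot\hat Y|^2/(4c\alpha(y,\hat Y))}\,d\hat Y,\qquad \langle\xi\rangle:=(\xi^2+F^2)^{1/2}.
\]
The only property of $\alpha$ that survives this reduction is its local uniform positivity $0<c_1\leq\alpha\leq c_2$, which is exactly what convexity provides.

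The lower bound is then established in two regimes according to the size of $|\eta|/\langle\xi\rangle$. For $|\eta|/\langle\xi\rangle\leq C$, the prefactor $\langle\xi\rangle^{-1}$ is already equivalent to $\langle(\xi,\eta)\rangle^{-1}$ and the spherical integrand is bounded below by a positive constant. For $|\eta|/\langle\xi\rangle\geq N$ large, where $\langle(\xi,\eta)\rangle^{-1}\simeq|\eta|^{-1}$, I would split $\hat Y$ into components parallel and orthogonal to $\eta/|\eta|$ and exploit the fact that $\frac{|\eta|}{\langle\xi\rangle}e^{-c'(\hat Y^\parallel|\eta|/\langle\xi\rangle)^2}$ converges in distributions to $\delta_0$ in the parallel variable; this produces a gain of $\langle\xi\rangle/|\eta|$ and an integral over the transverse sphere $\S^{n-3}$ (which is where the hypothesis $n\geq 3$ is essential), combining with the prefactor to yield the desired $|\eta|^{-1}\simeq\langle(\xi,\eta)\rangle^{-1}$.

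The main obstacle, compared with the geodesic setting, is exactly this loss of the Gaussian structure in $Y$; the polar-coordinate reduction and the distributional-delta limit above are the substitutes. The final step is the approximation argument: truncating and smoothing the Gaussian yields $\chi\in C^\infty_c(\R)$ with $\chi(0)=1$, $\chi\geq 0$, and since the established lower bound is strict and depends continuously on $\chi$ in an appropriate seminorm, a sufficiently fine truncation preserves the lower bound, furnishing the desired compactly supported $\chi$.
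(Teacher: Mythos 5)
Your proposal matches the paper's own argument in every essential respect: the Gaussian choice $\chi(s)=e^{-s^2/(2F^{-1}\alpha)}$, the $X$-Fourier transform producing the $\alpha_\pm$ Gaussian factors, the passage to polar coordinates in $Y$ to compensate for $\alpha$ not being a quadratic form, the two-regime analysis in $|\eta|/\langle\xi\rangle$ with the distributional delta limit and the $\S^{n-3}$ integral (where $n\geq 3$ enters), and the final truncation/approximation step. This is the same proof.
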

Hence, we have 
$$A_F=x^{-1}e^{-F/x}Ae^{F/x}\in\Psi_{sc}^{-1,0}$$
is elliptic both in the sense of the standard principal symbol (in the set of interest $O_c$), and the scattering principal symbol, which is at $x=0$. In particular the results of \cite[Section 2]{UV} on elliptic scattering pseudodifferential operators are applicable. Thus, for $c$ small, if we denote $M_c=\{x_c>0\}$, $H^{s,r}_{sc}(M_c)_K=\{f\in H^{s,r}_{sc}(M_c): supp f\subseteq K$ $(K$ is a compact subset of $O_c)\}$ (Roughly speaking, $H^{s,r}_{sc}(M_c)=x_c^{-r}H^s(M_c)$, see \cite[Section 2]{UV} for the detailed definition and properties),
$$A=xe^{F/x}A_Fe^{-F/x}: e^{F/x}H^{s,r}_{sc}(M_c)_K\rightarrow e^{F/x}H^{s+1,r+1}_{sc}(M_c)$$
satisfies the estimates
$$\|f\|_{e^{F/x}H^{s,r}_{sc}(M_c)_K}\leq C\|Af\|_{e^{F/x}H^{s+1,r+1}_{sc}(M_c)}.$$

By similar arguement as in the part after \cite[Lemma 3.6]{UV}, we thus deduce for $s\geq 0, \delta>0$
$$\|f\|_{e^{(F+\delta)/x}H^{s-1}(M_c)_K}\leq C\|Af\|_{e^{F/x}H^{s}(M_c)}\leq C'\|If\|_{H^s(\mathcal M_{M_c})}$$
when $f\in H^{s}(M_c)_K$. With $F+\delta$ replaced by $F$ (since both $F>0$ and $\delta>0$ are arbitrary), this completes the proof of the main theorem. Thus we obtain the local injectivity and stability estimates for the X-ray transform for general flows.



\begin{thebibliography}{aa}
\bibitem{AD} Y. M. Assylbekov, N. S. Dairbekov, \emph{The X-ray transform on a general family of curves on Finsler surfaces}, preprint, arXiv:1204.4383v1.

\bibitem{BG} I. N. Bernstein, M. L. Gerver, \emph{Conditions on distinguishability of metrics by hodographs}, Methods and Algorithms of Interpretation of Seismological Information, Computerized Seismology 13, Nauka, Moscow, 50-73 (in Russian).

\bibitem{Da} N.S. Dairbekov, \emph{Integral geometry problem for nontrapping manifolds}, Inverse Problems {\bf 22} (2006), 431–445.

\bibitem{DPSU} N. S. Dairbekov, G. P. Paternain, P. Stefanov, G. Uhlmann, \emph{The boundary rigidity problem in the presence of a magnetic field}, Adv. Math., {\bf 216} (2007), 535--609.

\bibitem{FSU} B. Frigyik, P. Stefanov, G. Uhlmann, \emph{The X-ray transform for a generic family of curves and weights}, J. Geom. Anal. {\bf 18} (2008), 81-97.

\bibitem{He} G. Herglotz, \emph{\"Uber die Elastizit\"at der Erde bei Ber\"ucksichtigung ihrer variablen Dichte}, Zeitschr. f\"ur Math. Phys., {\bf 52} (1905), 275-299.

\bibitem{Kr} V. Krishnan, \emph{ A support theorem for the geodesic ray transform on functions}, J. Fourier Anal. Appl. {\bf 15} (2009), 515-520.

\bibitem{Mel} R. B. Melrose, \emph{Spectral and scattering theory for the Laplacian on asymptotically Euclidean spaces}. Marcel Dekker, 1994.

\bibitem{Mu1} R. G. Mukhometov, \emph{The reconstruction problem of a two-dimensional Riemannian metric, and integral geometry (Russian)}, Dokl. Akad. Nauk SSSR {\bf 232} (1977), no.1, 32-35.

\bibitem{Mu2} R. G. Mukhometov, \emph{On a problem of reconstructing Riemannian metrics}, Siberian Math. J. {\bf 22} (1982), no. 3, 420-433.

\bibitem{MR} R. G. Mukhometov, V. G. Romanov, \emph{On the problem of finding an isotropic Riemannian metric in an n-dimensional space (Russian)}, Dokl. Akad. Nauk SSSR {\bf 243} (1978), no. 1, 41-44.

\bibitem{Sh2} V.A. Sharafutdinov, \emph{Integral geometry of a tensor ﬁeld on a surface of revolution}, Siberian Math. J. {\bf 38} (1997), 603–620.

\bibitem{Sh3} V.A. Sharafutdinov, \emph{A problem in integral geometry in a nonconvex domain}, Siberian Math. J. {\bf 43} (2002), 1159–1168.

\bibitem{SU1} P. Stefanov, G. Uhlmann, \emph{Stability estimates for the X-ray transform of tensor fields and boundary rigidity}, Duke Math. J. {\bf 123} (2004), 445-467.

\bibitem{SU2} P. Stefanov, G. Uhlmann, \emph{Boundary and lens rigidity, tensor tomography and analytic microlocal analysis}, in Algebraic Analysis of Differential Equations, Fetschrift in Honor of Takahiro Kawai, edited by T. Aoki, H. Majima, Y. Katei and N. Tose, pp. 275-293 (2008).

\bibitem{SU3} P. Stefanov, G. Uhlmann, \emph{The geodesic X-ray transform with fold caustics}, Analysis and PDE, {\bf 5} (2012), 219-260.

\bibitem{SU4} P. Stefanov, G. Uhlmann, \emph{Recovery of a source term or a speed with one measurement and applications}, to appear in Transactions AMS.

\bibitem{UV} G. Uhlmann, A. Vasy, \emph{The inverse problem for the local geodesic ray transform}, preprint, arXiv:1210.2084v1.

\bibitem{WZ} E. Wiechert, K. Zoeppritz, \emph{\"Uber Erdbebenwellen}, Nachr. Koenigl. Geselschaft Wiss, Goettingen, {\bf 4} (1907), 415-549.

\end{thebibliography}
\end{document}